\DeclareMathOperator{\supp}{supp}
\DeclareMathOperator{\Cof}{Cof}
\DeclareMathOperator{\forces}{\Vdash}
\newtheorem{theorem}{Theorem}
\newtheorem{remark}{Remark}
\newtheorem{definition}{Definition}
\begin{document}

\author{James Cummings}
\address{Department of Mathematical Sciences\\
         Carnegie Mellon University\\
         Pittsburgh PA 15213-3890\\
         USA} 
\email{jcumming@andrew.cmu.edu}

\author{Mirna D{\v z}amonja}
\address{School of Mathematics\\
         University of East Anglia\\
         Norwich Research Park\\
         Norwich NR4 7TJ\\
         UK}
\email{M.Dzamonja@uea.ac.uk}

\author{Itay Neeman}
\address{Department of Mathematics\\
         University of California, Los Angeles\\
         Los Angeles CA 90095-1555\\
         USA} 
\email{ineeman@math.ucla.edu}

\title{Iteration of strongly $\kappa^+$-cc forcing posets}

\maketitle

\section{Introduction}

   One of the basic results in iterated forcing states that a finite support iteration of ccc forcing is ccc.
   It is natural to look for extensions of this result: the most natural setting for generalisations is to let $\kappa$ be an uncountable
   regular cardinal such that $\kappa^{<\kappa} = \kappa$, and consider $<\kappa$-support iterations in which each iterand is $\kappa$-closed and $\kappa^+$-cc.
   It is known that (even for the case where $\kappa = \aleph_1$ and CH holds) such iterations do not in general have $\kappa^+$-cc \cite{ShelahStanley}, so
   we will need to strengthen the closure and chain condition hypotheses on the iterands.

   Shelah \cite{Shelah} proved that if we strengthen the chain condition assumption a lot and the closure assumption a little then
   we get a useful iteration theorem. More precisely, let $\kappa = \kappa^{<\kappa}$ and say that a poset $\mathbb P$
   is {\em regressively $\kappa^+$-cc} if it enjoys the following property: for every sequence $(p_i)_{i < \kappa^+}$ of conditions in $\mathbb P$ 
   there exist a club set $E \subseteq \kappa^+$ and a regressive function $f$ on  $E \cap \Cof(\kappa)$ such that
   $f(\alpha) = f(\beta)$ implies $p_\alpha$ is compatible with $p_\beta$. This looks technical, but can be motivated by the observation
   that if $\mathbb P$ was proved to be $\kappa^+$-cc by the standard $\Delta$-system and amalgamation arguments then the proof very 
   likely shows that $\mathbb P$ is regressively $\kappa^+$-cc. Shelah's iteration theorem states that a $<\kappa$-support
   iteration with $\kappa$-closed, well met, and regressively $\kappa^+$-cc iterands is regressively $\kappa^+$-cc. Here a  poset
   is {\em well met} if any pair of compatible conditions has a greatest lower bound (glb): Shelah \cite{Shelah} showed that in general
   this technical condition can not be removed. 
   
   We will prove an iteration theorem where the chain condition hypothesis is strengthened in a different direction. 
   Motivation for this work includes some results by Mekler \cite{Mekler} where the ccc is proved  
   using elementary submodels, and the more recent surge of interest (initiated by Mitchell's work
   on $I[\omega_2]$ \cite{Mitchell})  in the notion of strong properness. 

   In Section \ref{background} we give some background on forcing posets, elementary submodels and generic conditions. Section \ref{main} contains the statement and
   proof of our main theorem. Finally Section \ref{further} discusses some generalisations.

\section{Background} \label{background}

    For the rest of this paper we fix  an uncountable regular cardinal such that $\kappa^{<\kappa} = \kappa$.  
    We make the convention that when we write ``$N \prec H_{\theta}$'' we mean ``$N \prec (H_{\theta}, \in, <_\theta)$''
    where $<_\theta$ is a wellordering of $H_\theta$. The structure $(H_{\theta}, \in, <_\theta)$ has definable Skolem functions,
    so that if $N, N' \prec H_\theta$ then $N \cap N' \prec H_\theta$. When $N \prec H_\theta$ we write $\bar N$ for the
    transitive collapse of $N$, $\rho_N : N \simeq \bar N$ for the transitive collapsing map, and
    $\pi_N: \bar N \simeq N$ for its inverse.

\begin{definition} Let $\mathbb Q$ be a forcing poset and let $M \prec H_\theta$. A model $M$ is
  {$\kappa$-good for $\mathbb Q$} if and only if  $\kappa, {\mathbb Q} \in M$, $\vert M \vert = \kappa$
  and ${}^{<\kappa} M \subseteq M$.
\end{definition} 

\begin{remark} If $\mathbb Q \in H_\theta$, then the set of $M$ which are $\kappa$-good for $\mathbb Q$ is
   stationary in $P_{\kappa^+} H_\theta$. 
\end{remark} 

 When $M$ is $\kappa$-good for $\mathbb Q$ and $G$ is $\mathbb Q$-generic over $V$, we will study the subset
 $G \cap M$ of ${\mathbb Q} \cap M$. In a mild abuse of notation we sometimes write
 $\bar G$ for the subset $\rho_M[G \cap M]$ of the poset $\bar {\mathbb Q}$.  We write $M[G]$ for the
 set of elements of form $\dot\tau^G$ where $\dot\tau$ is a $\mathbb Q$-name in $M$.  

\begin{definition} Let $M$ be $\kappa$-good for $\mathbb Q$. Then:

\begin{enumerate}

\item  A condition $q \in {\mathbb Q}$ is {\em $(M, {\mathbb Q})$-generic} iff $q$ forces that
  $\bar G$ is  $\bar {\mathbb Q}$-generic over $\bar M$, and {\em strongly $(M, {\mathbb Q})$-generic} iff it forces that
  $\bar G$ is  $\bar {\mathbb Q}$-generic over $V$. 
 
\item  If $q \in {\mathbb Q}$ and $r \in {\mathbb Q} \cap M$, then $r$ is a {\em strong properness residue of $q$ (for $M$)} 
    iff for every $s \in {\mathbb Q} \cap M$ with $s \le r$, $q$ is compatible with $s$.  We write {\em spr} to abbreviate strong properness residue.

\end{enumerate} 

\end{definition} 


Assume that $M$ is $\kappa$-good for $\mathbb Q$. The following facts are standard: 

\begin{itemize}

\item  If $G$ is ${\mathbb Q}$-generic over $V$, then $M[G] \prec H_\theta[G] = H_\theta^{V[G]}$.
    If in addition $\mathbb Q$ is $\kappa$-closed then $V[G] \models {}^{<\kappa} M[G] \subseteq M[G]$. 

\item 
 A condition $q$ is  $(M, {\mathbb Q})$-generic iff $q$ forces that $M[\dot G] \cap V = M$. In this case
  $q$ forces that $\pi_M$ can be lifted to an elementary embedding $\pi_M: \bar M[\bar G] \rightarrow H_\theta[G]$.  

\item  A condition $q$ is strongly  $(M, {\mathbb Q})$-generic iff the set of conditions in $\mathbb Q$ which have a spr  
   for $M$ is dense below $q$. 

\item  The poset $\mathbb Q$ is $\kappa^+$-cc iff every condition in $\mathbb Q$ is 
   $(M, {\mathbb Q})$-generic. 

\end{itemize}

\begin{definition}
  A forcing poset $\mathbb Q$ is {\em strongly $\kappa^+$-cc} if and only if for all large $\theta$,
  for every  $M \prec H_\theta$ which is $\kappa$-good for $\mathbb Q$, every condition in $\mathbb Q$ is strongly $(M,{\mathbb Q})$-generic. Equivalently, densely many conditions have a spr for $M$, and this implies that in fact all conditions have a spr for $M$.  
\end{definition}



\section{An iteration theorem} \label{main}

\begin{theorem} \label{mainthm}  Let $\kappa$ be uncountable with $\kappa^{<\kappa} = \kappa$.  Let $\mathbb P$ be an iteration with $<\kappa$-supports such that each iterand ${\mathbb Q}_\alpha$
  is forced at stage $\alpha$ to have 
  the following properties:

\begin{enumerate}

\item  ${\mathbb Q}_\alpha$ is strongly $\kappa^+$-cc.

\item \label{cheesy1} ${\mathbb Q}_\alpha$ is well met.     
    
\item  \label{cheesy2} Every directed subset of ${\mathbb Q}_\alpha$ of size less than $\kappa$ has a glb.

\end{enumerate}
   Then $\mathbb P$ is strongly $\kappa^+$-cc. 

\end{theorem}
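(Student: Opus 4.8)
The plan is to prove the equivalent statement that for every large $\theta$, every $M \prec H_\theta$ which is $\kappa$-good for $\mathbb P$, and every $p \in \mathbb P$, the condition $p$ has a strong properness residue in $\mathbb P \cap M$; by the facts recorded in Section~\ref{background} this is exactly ``$\mathbb P$ is strongly $\kappa^+$-cc''. Fix such $M$ and note first that a decreasing $<\kappa$-sequence is directed, so hypothesis (\ref{cheesy2}) forces each ${\mathbb Q}_\alpha$ to be $\kappa$-closed; hence $\mathbb P$ and all its initial segments $\mathbb P_\beta$ are $\kappa$-closed, and for a $\mathbb P_\beta$-generic $G_\beta$ the model $M[G_\beta]$ is still $\kappa$-good for the tail iteration in $V[G_\beta]$. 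The argument is an induction on the length $\delta$ of $\mathbb P$, and to make the limit stages go through I would prove a strengthened statement: there is a \emph{canonical residue} $\operatorname{res}_M(p) \in \mathbb P_\delta \cap M$, defined by recursion on $\delta$ and commuting with restriction to coordinates in $M$, with the property that for every $s \le \operatorname{res}_M(p)$ in $\mathbb P_\delta \cap M$, every $\gamma \in M \cap (\delta+1)$, and every $t \le s\restriction\gamma,\ p\restriction\gamma$ in $\mathbb P_\gamma$, there is a common extension of $s$ and $p$ in $\mathbb P_\delta$ which extends $t$. (The case $\gamma = 0$ is just residue-ness.) The base case, and the case $p \in M$ (take $\operatorname{res}_M(p) = p$), are trivial.

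For the successor step $\delta = \gamma+1$ I would set $\operatorname{res}_M(p) = (\operatorname{res}_M(p\restriction\gamma),\ \dot r_\gamma)$, where $r_0 := \operatorname{res}_M(p\restriction\gamma)$ works for $p\restriction\gamma$ by the induction hypothesis, and $\dot r_\gamma$ is a $\mathbb P_\gamma$-name in $M$ chosen so that $p\restriction\gamma$ forces ``$\dot r_\gamma$ is a strong properness residue of $p(\gamma)$ for $M[\dot G_\gamma]$''. Such a residue exists in $V[G_\gamma]$ because $\mathbb P_\gamma$ is strongly $\kappa^+$-cc (so $M[\dot G_\gamma] \cap V = M$ and $M[\dot G_\gamma]$ is $\kappa$-good for ${\mathbb Q}_\gamma$) and ${\mathbb Q}_\gamma$ is forced strongly $\kappa^+$-cc; the delicate point is that it must be \emph{named inside $M$}, which I would arrange by passing to the transitive collapse, where $\rho_M(\dot{\mathbb Q}_\gamma)$ is forced strongly $\kappa^+$-cc over $\bar M[\bar G_\gamma]$, the residue it produces lies in $\bar M[\bar G_\gamma]$, and $\pi_M$ carries it back to the required name in $M$. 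Verifying the strengthened property is then routine: given $s$, $\gamma^*$, $t$ with $\gamma^* \le \gamma$, use the induction hypothesis on $\mathbb P_\gamma$ to extend $t$ to a common lower bound $t_0'$ of $s\restriction\gamma$ and $p\restriction\gamma$, then append on coordinate $\gamma$ the greatest lower bound of $s(\gamma)$ and $p(\gamma)$ — which exists as a single name because $t_0' \le s\restriction\gamma \le r_0$ forces $s(\gamma)$ to lie in $M[\dot G_\gamma]$ below $\dot r_\gamma$, and $t_0' \le p\restriction\gamma$ forces $\dot r_\gamma$ to be a residue of $p(\gamma)$, so $s(\gamma)$ and $p(\gamma)$ are compatible and (\ref{cheesy1}) gives the glb; the case $\gamma^* = \gamma+1$ is immediate.

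For a limit $\delta$ with $\operatorname{cf}(\delta) \ge \kappa$ (this subsumes $\operatorname{cf}(\delta) = \kappa$): any $p$ has support of size $<\kappa$, so $\supp(p) \cap M$, being a subset of $M$ of size $<\kappa$, lies in $M$ and is bounded in $\delta$ by some $\gamma \in M$ with $\gamma < \delta$, and I would set $\operatorname{res}_M(p) := \operatorname{res}_M(p\restriction\gamma)$ for the least such $\gamma$. The key observation is that every element of $\supp(p)$ above $\gamma$ lies outside $M$, hence is disjoint from the support of any $s \in \mathbb P_\delta \cap M$; so a common extension of $s\restriction\gamma$ and $p\restriction\gamma$ extending the given $t$ (available from the induction hypothesis) amalgamates with $p$ above $\gamma$ coordinatewise, yielding the strengthened property at $\delta$.

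The main obstacle is the limit case $\operatorname{cf}(\delta) = \mu < \kappa$. Here I fix once and for all the $<_\theta$-least continuous increasing cofinal sequence $\langle \delta_i : i < \mu\rangle$ in $\delta$ (so it, and all its entries, lie in $M$), and set $\operatorname{res}_M(p) := \bigcup_{i<\mu}\operatorname{res}_M(p\restriction\delta_i)$; by the locality built into the induction hypothesis this union is coherent, and its support is an increasing union of $\mu < \kappa$ sets of size $<\kappa$, hence of size $<\kappa$ and contained in $M$, so $\operatorname{res}_M(p) \in \mathbb P_\delta \cap M$. The work is in the strengthened property: given $s$, $\gamma$, $t$ (after enlarging $\gamma$ to some $\delta_{i_0} \in M$), I build a coherent increasing-domain sequence $\langle t^i : i_0 \le i < \mu\rangle$ with $t^i$ a common lower bound of $s\restriction\delta_i$ and $p\restriction\delta_i$ extending $t$: successor steps apply the strengthened induction hypothesis at $\delta_{i+1}$ to $\operatorname{res}_M(p\restriction\delta_{i+1})$ with ``$\gamma$'' there taken to be $\delta_i$, and limit steps take the coherent union. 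This is where hypotheses (\ref{cheesy1}) and (\ref{cheesy2}) earn their keep: via $\kappa$-closure and canonical greatest lower bounds they guarantee that the objects assembled at limit stages of this recursion — both the intermediate residues $\operatorname{res}_M(p\restriction\delta_i)$ for limit $i$ and the partial lower bounds $t^i$ — are genuine conditions, and they keep the whole construction canonical enough to be carried out inside $M$. Setting $t' := \bigcup_i t^i$ gives the desired common extension of $s$ and $p$ extending $t$, which completes the induction and the proof.
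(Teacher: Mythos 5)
There is a genuine gap, and it sits exactly where the paper's own digression (the two-step example ${\mathbb Q}_0 * \dot{\mathbb Q}_1$) warns that it will: in your successor step you need a single name $\dot r_\gamma \in M$ such that $p\restriction\gamma$ \emph{itself} forces ``$\dot r_\gamma$ is a spr of $p(\gamma)$ for $M[\dot G_\gamma]$''. What the hypotheses actually give is weaker: in $V[G_\gamma]$ the condition $p(\gamma)^{G_\gamma}$ has a spr lying in $M[G_\gamma]$, i.e.\ it is the evaluation of \emph{some} name in $M$, but which name works depends on $G_\gamma$. To fuse these into one name forced by $p\restriction\gamma$ to be a spr you must mix along an antichain below $p\restriction\gamma$; since $p\restriction\gamma$, $p(\gamma)$ and that antichain need not belong to $M$, the mixed name has no reason to lie in $M$ --- this is precisely the ``$\dot r_1$ itself is in $M$'' problem the paper flags. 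Your proposed repair via the transitive collapse does not address it: $\rho_M$ acts only on members of $M$, and $p(\gamma)$ is not one, so the collapsed statement ``$\rho_M(\dot{\mathbb Q}_\gamma)$ is strongly $\kappa^+$-cc'' produces residues only for conditions named inside $\bar M[\bar G_\gamma]$, not for $p(\gamma)$. (Note also that the known fix in the two-step case is to \emph{extend} $p\restriction\gamma$ until it decides the identity of such a term --- which your scheme, assigning a residue to $p$ as given, cannot afford to do at every coordinate of an infinite iteration.)

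The paper's proof avoids this by giving up on producing a residue for $p$ itself: it builds an auxiliary $H \prec H_\theta$ of size $<\kappa$ with $M \in H$ and an $\omega$-sequence $(p_i)$ below $p$ generic over $H$, takes $q$ to be its glb, and lets $r$ be the glb of $g \cap M$ where $g$ is the filter the $p_i$ generate on ${\mathbb P} \cap H$; genericity is what guarantees that for each coordinate $\alpha$ and each $i$ some $p_j$ decides a name $\dot r \in M \cap H$ for a spr of $p_i(\alpha)$, and then $r$ is a spr for $q$. Since densely many conditions with sprs suffice for strong $\kappa^+$-cc, this is enough. So the missing ingredient in your argument is some analogue of this fusion/genericity step; without it the successor case of your induction is unjustified. (Two smaller points, both repairable in principle: your sequence $(t^i)$ in the short-cofinality limit case is not literally coherent, since the strengthened hypothesis only gives a condition whose restriction is $\le t^i$, so the final object must be assembled coordinatewise as in Remark \ref{glbremark} using hypotheses (\ref{cheesy1}) and (\ref{cheesy2}); and coherence of $\operatorname{res}_M(p\restriction\delta_i)$ across the recursion needs an explicit canonical choice at successors, which again presupposes the problematic existence of $\dot r_\gamma \in M$.)
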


Depending on the exact way one defines ``directed'' in condition (\ref{cheesy2}), condition (\ref{cheesy2}) may be read to subsume condition (\ref{cheesy1}).

    Before proving the theorem, we digress briefly to illustrate the difficulties and motivate the main idea.
  Consider the case of an iteration ${\mathbb P}_2 = {\mathbb Q}_0 * \dot{\mathbb Q}_1$ of length two, where ${\mathbb Q_0}$ is
  strongly $\kappa^+$-cc and forces that $\dot{\mathbb Q}_1$ is strongly $\kappa^+$-cc. Let
  $M$ be $\kappa$-good for ${\mathbb P}_2$, and let $(q_0, \dot q_1)$ be an arbitrary condition for which we aim to construct a spr.
  If $r_0$ is a spr for $q_0$ and $M$, while $\dot r_1$ names a spr for $\dot q_1$ and $M[\dot G_0]$, then we are not warranted
  in claiming that $(r_0, \dot r_1)$ is a spr for $(q_0, \dot q_1)$. The issue is that while $\dot r_1$ names something which is the denotation
  of a term in $M$, there is no reason to think $\dot r_1$ itself is in $M$. In this simple case we can cope by first extending $q_0$ to
  some $q_0'$, which determines the identity of some term $\dot r_1'$ which denotes a spr for $\dot q_1$, and then choosing $r_0'$
  which is a spr for $q_0'$: this clearly becomes problematic for an iteration of infinite length. We will deal with this kind of problem by     
  building a spr on every relevant coordinate simultaneously. This is similar to the approach taken by \cite{GiltonNeeman}, but without a need for side conditions.

\begin{remark} \label{glbremark}  It is easy to see that condition \ref{cheesy2} is
   preserved by iteration with
   $<\kappa$-supports, so that $\mathbb P$ satisfies it.  To be explicit, if $D$ is a directed subset of
   $\mathbb P$ with $\vert D \vert < \kappa$  then we construct a glb $p$ for $D$ inductively.
   We build $p$ so that $\supp(p) = \bigcup_{t \in D} \supp(t)$: at stage $i$ we have that $p \restriction i$
   is a glb for $\{ t \restriction i : t \in D \}$, observe that $p \restriction i$ forces
    $\{ t(i) : t \in D \}$ to be directed, and choose $p(i)$ to name a glb for this set.  
\end{remark}

\begin{proof}[Proof of Theorem \ref{mainthm}]

 Let the length of the iteration $\mathbb P$ be $\gamma$, let $\theta$ be sufficiently large and let  
 $M$ be $\kappa$-good for $\mathbb P$. 
 Let $p \in {\mathbb P}$ be arbitrary. We will produce $q \le p$ such that $q$ has a spr $r$ for $M$.

 We note that if $\alpha \in M \cap \gamma$ and $G_\alpha$ is ${\mathbb P}_\alpha$-generic,
 then it is routine to check that $M[G_\alpha]$ is $\kappa$-good for ${\mathbb Q}_\alpha$ in $V[G_\alpha]$.
 It follows that every condition in ${\mathbb Q}_\alpha$ has a spr for $M[G_\alpha]$.

 We choose a certain auxiliary model $H$ such that $q, M \in H$ and $\vert H \vert < \kappa$. 
  To construct $H$ we build an increasing chain of models $(H_i)_{i<\omega}$  and a strictly decreasing chain of conditions $(p_i)_{i < \omega}$ in $\mathbb P$ such that:  
\begin{enumerate}
\item  For all $i$, $H_i \prec H_\theta$ and $\vert H_i \vert < \kappa$.
\item   $\supp(p) \cup  \{ p, M \} \subseteq  H_0$. 
\item  $p_0 = p$.
\item For all $i$,  $p_{i+1} \le p_i$ and  $p_{i+1} \in D$ for every dense open $D \in H_i$.  
\item For all $i$,  $H_i \cup \supp(p_{i+1})  \cup  \{ p_{i+1}, H_i \} \subseteq  H_{i+1}$.
\end{enumerate} 
   
 We may choose $p_{i+1}$ because (using Remark \ref{glbremark}) $\mathbb P$ is $\kappa$-closed. 
 At the end we set $H = \bigcup_n H_n$. 
 By Remark \ref{glbremark})  
  the sequence $(p_n)$ has a glb $q$.


 We record some information: 
\begin{enumerate}

\item  By construction $H \prec H_\theta$, $\vert H \vert < \kappa$ and $p, M \in H$.

\item  By  Remark \ref{glbremark}, $\supp(q) = \bigcup_n \supp(p_n)$
  and  $q \restriction \alpha$ forces that $q(\alpha)$ is the glb of the sequence $(p_n(\alpha))$. 

\item If $g = \{ x \in {\mathbb P} \cap H : \exists i \; p_i \le x \}$, then $g$ is a filter on $\mathbb P \cap H$ 
  which meets every dense open set in $H$.

\item By  definition, $q$ is the glb of $g$. We claim that $g = \{ x \in {\mathbb P} \cap H : q \le x \}$. Clearly if $x \in g$ then
  $q \le x$, and if $x \notin g$ then by genericity there is $n$ such that $p_n \perp x$ and so $q \nleq x$. 

\item We claim that the  support of $q$ is $H \cap \gamma$. By construction $\supp(p_n) \subseteq H_n \cap \gamma$ for all $n$, and so
  $\supp(q) \subseteq H \cap \gamma$; conversely if $\alpha \in H \cap \gamma$ then by genericity there is  $n$ such that $\alpha \in \supp(p_n)$.  


\end{enumerate}

The set  $g \cap M$ is a directed subset of $\mathbb P$ and
 $\vert g \cap M \vert \le \vert H \vert < \kappa$, so $g \cap M$  has an glb $r$.
  Since ${}^{<\kappa} M \subseteq M$, $g \cap M \in M$ and so by elementarity  $r \in M$.

\smallskip

\noindent Main Claim:  $r$ is a spr for the condition $q$ and the model $M$.

\smallskip

\noindent Proof of Main Claim:  We let $s \le r$ with $s \in M$ and build inductively a condition $q^*$
  such that $q^*$ is a common refinement of $s$ and $q$. The induction is easy except at
  coordinates $\alpha \in \supp(s) \cap \supp(q)$, so fix such an $\alpha$.  The support of $s$ is contained in $M$,
  and the support of $q$ is contained in $H$,  so  $\alpha \in H \cap M \cap \gamma$.
  Note that $s \le r$ and by induction  $q^* \restriction \alpha \le s \restriction \alpha$, 
  so that $q^* \restriction \alpha \forces s(\alpha) \le r(\alpha)$.

  For each $i < \omega$, define a set $D_i \subseteq {\mathbb P}$ as follows: $D_i$ is the set of
  $t \in \mathbb P$ such that either $t \perp p_i$, or $t \le p_i$ and
  there is $\dot r \in M$ such that $t \restriction \alpha$ forces
  ``$t(\alpha) \le \dot r$, and $\dot r$ is a spr for $p_i(\alpha)$ and $M[\dot G_\alpha]$''.
  Since $\alpha, p_i, M \in H$ we have by elementarity that $D_i \in H$.  

  We claim that $D_i$ is dense. Let $t_0 \in {\mathbb P}$ be arbitrary.
  If $t_0$ is incompatible with $p_i$ then $t_0 \in D_i$, otherwise
  we find $t_1 \le t_0, p_i$.  Extending $t_1 \restriction \alpha$ if necessary, we may assume that
  $t_1 \restriction \alpha$ determines some $\dot r \in M$ which denotes a spr
  for $t_1(\alpha)$; now $t_1 \restriction \alpha$ forces that $\dot r$ and $t_1(\alpha)$ are
  compatible so extending $t_1$ at coordinate $\alpha$ we obtain a condition $t_2 \le t_1$ such that  
  $t_2 \restriction \alpha$ forces $t_2(\alpha) \le \dot r$. Since
  $t_2 \le t_1 \le p_i$ we have $t_2 \restriction \alpha \forces t_1(\alpha) \le p_i(\alpha)$,
  so $t_2 \restriction \alpha$ forces that $\dot r$ is a spr for $p_i(\alpha)$. 
   
   By the construction of the sequence $(p_i)$, we find $j$  such that $p_j \in D_i$. 
   From the definitions $p_j \le p_i$ (that is $j \ge i$), and $p_j \restriction \alpha$  forces
   ``$p_j(\alpha) \le \dot r$ and $\dot r$ is a spr for $p_i(\alpha)$'' for some $\dot r \in M$.
   As $p_j, p_i, \alpha, M \in H$  we may assume by elementarity that $\dot r \in M \cap H$.
   Now if we let $r^*$ be the condition in
   $\mathbb P$ that has $\dot r$ at coordinate $\alpha$ and is otherwise trivial,
   $p_j \le r^* \in M \cap H$ so that $r^* \in g \cap M$.

   So $r \le r^*$, and since $q^* \restriction \alpha \le s \restriction \alpha \le r \restriction \alpha$
   we have $q^* \restriction \alpha \forces r(\alpha) \le r^*(\alpha) = \dot r$.
   Since also $q^* \restriction \alpha \le p_j \restriction \alpha$,
   $q^* \restriction \alpha$ forces that $\dot r$ is a spr for $p_i(\alpha)$.
   Since $q^* \restriction \alpha \forces s(\alpha) \le r(\alpha) \le \dot r$,
   $q^* \restriction \alpha$ forces that $s(\alpha)$ is compatible with $p_i(\alpha)$.

  Now we force below $q^* \restriction \alpha$ to obtain a generic object $G_\alpha$, and work
  in $V[G_\alpha]$ to compute a lower bound for the decreasing sequence $(s(\alpha) \wedge p_i(\alpha))$.
 Let $q^*(\alpha)$ name a lower bound, then
  $q^* \restriction \alpha$ forces that $q^*(\alpha)$ is a lower bound for the sequence
  $(p_i(\alpha))$, and  (since $q^* \restriction \alpha \le q \restriction \alpha$)
  also that $q(\alpha)$ is the glb for the sequence $(p_i(\alpha))$. Hence
  $q^* \restriction \alpha$ forces that $q^*(\alpha) \le q(\alpha)$. Hence
  $q^* \restriction \alpha \forces q^*(\alpha) \le q(\alpha), s(\alpha)$ as required.

\end{proof} 

\section {Further results} \label{further}

With more work we can weaken the closure hypotheses on the iterands as follows:
it is enough to assume that each iterand ${\mathbb Q}_\alpha$ 
is forced to be $<\kappa$-strategically closed, to be countably closed, and to satisfy
the strengthened form of countable strategic closure in which
   move $\omega$ is required to be a glb for the moves played at finite stages. 

   The iteration theorem can also be generalised in other directions. For example
   let $S \subseteq \kappa^+ \cap \Cof(\kappa)$ be stationary,
   and define a poset to be $S$-strongly $\kappa^+$-cc if sprs exist for $\kappa$-good
   models $M$ with $M \cap \kappa^+ \in S$.
   Then $S$-strongly $\kappa^+$-cc forcing posets preserve the stationarity of $S$, and an
   iteration of $S$-strongly
   $\kappa^+$-cc posets with appropriate closure properties is $S$-strongly $\kappa^+$-cc.  
   To prove the generalisation to $S$-strongly $\kappa^+$-cc posets, simply restrict throughout to $M$
   such that $M \cap \kappa^+ \in S$. 

We briefly sketch the proof of the generalisation weakening the closure hypothesis on the iterands. 

We can construct $p_i$ and $q$ as in the proof of Theorem \ref{mainthm} from the weaker hypotheses.
$p_{i+1}$ can be constructed using $<\kappa$-strategic closure. If $\sigma$ is a strategy for player II to
produce descending chains of length $\omega$ with a glb, and taking $\sigma\in H_0$, one can use the fact
that $p_{i+1}$ meets all dense open sets in $H_i$ to find a play $(u_n)_{n<\omega}$ by $\sigma$ so that
$p_{i+1}\leq u_{2i+1}\leq u_{2i}\leq p_i$. This ensures that $(p_i)_{i<\omega}$ has a glb.

The final argument in the proof of Theorem \ref{mainthm}, obtaining a lower bound for the sequence
$(s(\alpha)\land p_i(\alpha))$, goes through with countable closure. 

The only other use of closure in the proof is in defining $r$, a glb for $g\cap M$. We prove that this can
be done with the weakened assumptions. 

The support of $r$ is $M\cap H\cap \gamma$. We work by induction on $\alpha\in M\cap H\cap \gamma$ to
define $r(\alpha)$, assuming that $r\restriction\alpha$ has been defined and is a glb for $(g\cap M)\restriction\alpha$.
Passing to the transitive collapse $\bar{H}$ of $H$, we have that $\bar{g}=\rho_H[g]$ is generic over $\bar{H}$
for $\rho_H({\mathbb P})$.
So $\bar{g}\restriction\alpha$ is generic for $\rho_H({\mathbb P}\restriction\alpha)$ over $\bar{H}$,
and $\bar{g}(\alpha)$ is generic for $\bar{\mathbb Q}_\alpha=\rho_H(\dot{\mathbb Q}_\alpha)[\bar{g}\restriction\alpha]$
over $H[\bar{g}\restriction \alpha]$.

By the strong chain condition, $\bar{g}(\alpha)\cap \rho_H(M)[\bar{g}\restriction\alpha]$ is generic over
$\rho_H(M)[\bar{g}\restriction\alpha]$. Using the strategic closure of the $\alpha^{\rm th}$ iterand
it follows that for each $i$, there is a lower bound
$w_i\in \bar{g}(\alpha)\cap \rho_H(M)[\bar{g}\restriction\alpha]$ for
$\bar{g}(\alpha)\cap \rho_H(M)[\bar{g}\restriction\alpha]\cap \rho_H(H_i)[\bar{g}\restriction\alpha]$.
Let $\dot{\tau}_\alpha\in H\cap M$ be a strategy for player II to produce descending chains of length $\omega$ with a glb
in $\dot{\mathbb Q}_\alpha$. Using the genericity of $\bar{g}(\alpha)\cap \rho_H(M)[\bar{g}\restriction\alpha]$
one can pick $w_i$ to be part of a play by $\rho_H(\dot{\tau}_\alpha)[\bar{g}\restriction\alpha]$.
Let $\dot{w}_i$ name $w_i$. Note that by genericity the fact that the conditions $\dot{w}_i$ are part of a play
by $\rho_H(\dot{\tau}_\alpha)$ is forced by conditions in $\bar{g}(\alpha)\cap \rho_H(M)[\bar{g}\restriction\alpha]$.
Then $r\restriction\alpha$, being a lower bound for $(g\cap M)\restriction\alpha$, forces that
the conditions $\pi_H(\dot{w_i})$ are part of a play according to $\dot{\tau}_\alpha$,
and therefore $(\pi_H(\dot{w}_i))_{i<\omega}$ has a glb. Let $\dot{r}(\alpha)$ name this glb.
One can check that then $r\restriction\alpha+1$ is a glb for $g\restriction\alpha+1$.

\end{document}